\newtheorem {claim}{Claim}
\newtheorem{theorem}{Theorem}[section]
\newtheorem{lemma}[theorem]{Lemma}
\newtheorem{pro}[theorem]{Problem}
\title{Claw-free bricks that every $b$-invariant edge is solitary}
\author{{\small\bf Yipei Zhang$^{1}$, Xiumei Wang$^{2}$}\thanks{Corresponding author. Email address: wangxiumei@zzu.edu.cn}\\
{\small $^{1}$School of Mathematics and Statistics, North China University of Water Resources and Electric Power,}\\
{\small Zhengzhou, Henan 450046,  China}\\
{\small $^{2}$School of Mathematics and Statistics, Zhengzhou University,}\\
{\small Zhengzhou, Henan 450001,  China}}
\date{}
\begin{document}
\maketitle

\begin{abstract}
A graph $G$ is a brick if it is 3-connected and $G-\{u,v\}$ has a perfect matching for any two distinct vertices $u$ and $v$ of $G$.
Lucchesi and Murty proposed a problem concerning the characterization of bricks, distinct from $K_4$, $\overline{C_6}$ and the Petersen graph, in which every $b$-invariant edge is solitary.
In this paper, we present a characterization of this problem when the bricks are claw-free.\\

\noindent{\bf Keywords:}  Claw-free brick; Removable edge; $b$-invariant edge; Solitary \\
\end{abstract}

\section{Introduction}

All  graphs considered in this paper are finite and simple.
For any undefined notation and terminology, we follow \cite{BM08}.
Let $G$ be a graph with vertex set $V(G)$ and edge set $E(G)$.
For a subset $X$ of $V(G)$, we denote by $G[X]$ the subgraph of $G$ induced by $X$.
We denote by $K_n$ and $K_{s,t}$ the complete graph with $n$ vertices and the complete bipartite
graph with parts of sizes $s$ and $t$, respectively.
We say that a graph $G$ is {\it claw-free} if it does not contain $K_{1,3}$ as an induced subgraph.

For two disjoint nonempty proper subsets $X$ and $Y$ of $V(G)$,
let $E_G[X,Y]$ denote the set of all the edges of $G$ with one end in $X$ and the other end in $Y$,
let $N_G(X)$ denote the set of all the vertices  in $\overline{X}$ that have a neighbour in $X$, and
let $\partial_G(X)$ denote the set $E_G[X,\overline{X}]$, where $\overline{X}:=V(G)\setminus X$.
The subscript $G$ is omitted from the notation $E_G[X,Y]$, $N_G(X)$ and $\partial_G(X)$ when no ambiguity is possible.
The set $\partial(X)$ is referred to as a {\it cut} of $G$.
For a  cut $\partial(X)$ of $G$, we say  $\partial(X)$ is {\it trivial} if either $X$ or $\overline{X}$
consists of a single vertex, and is \emph{nontrivial} otherwise.
Further, a cut $\partial(X)$ of $G$ is called {\it tight} if each perfect matching of $G$ contains exactly one edge in $\partial(X)$.
For a  cut $\partial(X)$ of $G$,  we use
$G/X$ and $G/\overline{X}$ to denote the two graphs obtained from $G$ by shrinking $X$ and $\overline{X}$ to a single vertex, respectively, and such  two graphs are called the {\it $\partial(X)$-contractions} of $G$.

A connected graph $G$  with at least one edge is \emph{matching covered} if every edge of $G$ is contained in a perfect matching of $G$.
A matching covered graph which is free of nontrivial tight cuts is
called a \emph{brick} if it is nonbipartite, and  a \emph{brace} otherwise.
It is important to observe that if a matching covered graph $G$ has a nontrivial tight cut $\partial(X)$, then the two $\partial(X)$-contractions of $G$ are also matching covered.
By repeatedly performing such contractions, we can derive a list of matching covered graphs without nontrivial tight cuts, which are  either bricks or braces.
This procedure is known as a {\it tight cut decomposition} of $G$.
Note that the tight cut decomposition of a matching covered graph is generally not unique, as it depends on the sequence of nontrivial tight cuts selected during the decomposition process.
However, Lov\'asz \cite{Lovasz1987} proved that any two tight cut decompositions of a matching covered graph yield the same list of bricks and braces, up to multiple edges.
These bricks are called  the bricks of $G$, and their number is denoted by $b(G)$.
Observe that $b(G)=0$ if and only if $G$ is bipartite, and $b(G)=1$ if $G$ is a brick.

For the bricks, Edmonds et al. \cite{ELP82}  established an equivalent definition by demonstrating that
a graph $G$ is a \emph{brick} if and only if it is 3-connected and $G-\{u,v\}$ has a perfect matching for any two distinct vertices $u$ and $v$ of $G$.
Hence, for any brick $G$, we can derive that $|V(G)|\geq4$ and $\delta(G)\geq 3$.

\begin{figure}[h]
 \centering
 \includegraphics[width=0.8\textwidth]{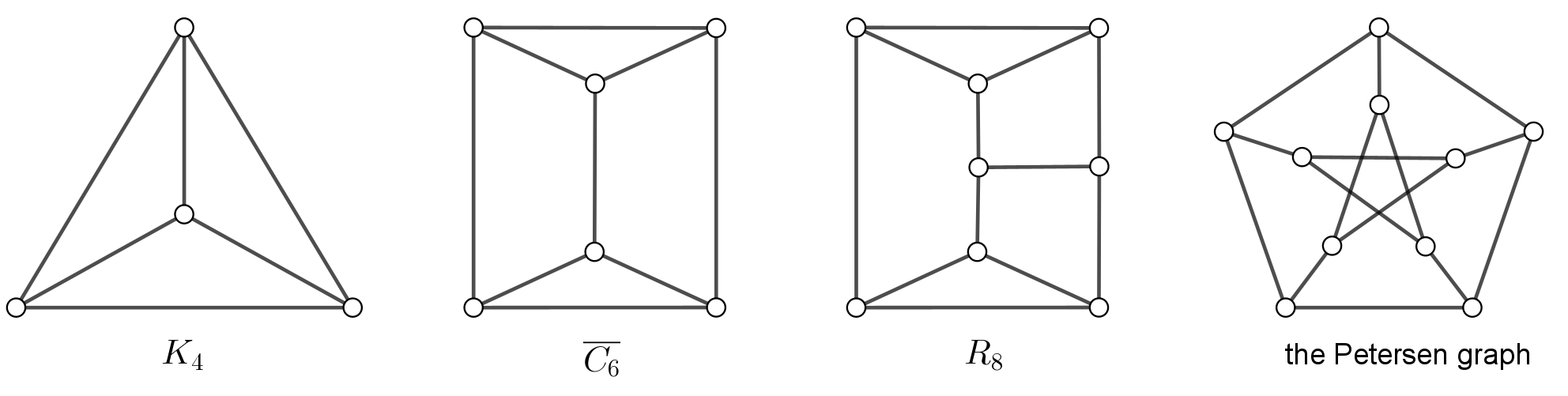}\\
 \caption{The four bricks.}\label{fig1}
\end{figure}

Let $G$ be a matching covered graph.
An edge $e$ of $G$ is \emph{removable} if $G-e$ is also matching covered, and is {\it nonremovable} otherwise.
A removable edge $e$ of $G$ is \emph{$b$-invariant} if $b(G-e)=b(G)$.
For the existence of $b$-invariant edges in bricks, Lov\'asz \cite{Lovasz1987} proposed the
conjecture that every brick different from $K_4$, $\overline{C_6}$ and the Petersen graph
(as shown in Figure \ref{fig1}) has a $b$-invariant edge.
Carvalho et al. \cite{CLM2002} confirmed this conjecture and  improved the result as follows.

\begin{theorem}[\cite{CLM2002}]\label{2-b-invariant-edge}
Every brick different from $K_4$, $\overline{C_6}$, $R_8$ and the Petersen graph
has at least two b-invariant edges.
\end{theorem}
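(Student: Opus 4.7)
The plan is to argue by induction on $|V(G)|$, using the baseline ``at least one $b$-invariant edge'' result of Lov\'asz (recalled in the excerpt) as the starting point. Assume the statement for all bricks of smaller order, and suppose for contradiction that $G$ is a brick with $G\notin\{K_4,\overline{C_6},R_8,\text{Petersen}\}$ and that $G$ has a unique $b$-invariant edge $e$. The task is then to extract a second $b$-invariant edge by descending to a smaller brick and lifting.

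First I would analyse the matching covered graph $H:=G-e$. Since $e$ is $b$-invariant, $b(H)=b(G)=1$, so applying the tight cut decomposition yields exactly one brick $J$ (together with a list of braces). Either $J=H$ (so $H$ itself is a brick of order $|V(G)|$, a possibility that must be treated on its own since induction on vertex count does not apply directly) or $J$ is obtained from $H$ by contracting the shores of a nonempty sequence of nontrivial tight cuts and hence is strictly smaller than $G$. In the latter case induction applies to $J$ as soon as $J$ avoids the four exceptions; in the former case one instead appeals to induction on a secondary parameter such as $|E(G)|$, or argues directly from the structure of $H$.

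The next step is the lifting argument: an edge $f$ of $J$ corresponds canonically to an edge of $H$ and hence of $G$, and one must show that if $f$ is $b$-invariant in $J$, then the corresponding edge of $G$ is removable in $G$ and satisfies $b(G-f)=b(G)$. This requires tracking how the tight cuts of $H$ extend to tight cuts of $G-f$ off the edge $e$, together with the characterization of removability through barriers (maximum-size odd-component configurations of $G-S$). Since $J$ carries at least two $b$-invariant edges by the inductive hypothesis, at least one of them, call it $f$, is distinct from $e$; lifting $f$ would then yield a second $b$-invariant edge in $G$, contradicting the uniqueness of $e$.

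The hard part will be the exceptional situations in which the brick $J$ produced by the decomposition itself belongs to $\{K_4,\overline{C_6},R_8,\text{Petersen}\}$, because the inductive hypothesis is silent there. For each such possibility I would analyse how $G$ can be reconstructed from $J$ by the inverse operations of the tight cut decomposition (the ``splicing'' or ear-type constructions developed by McCuaig and by Norine and Thomas) together with the restoration of the edge $e$. This is precisely where the supplementary exception $R_8$ must enter the list: one has to show that the only way an exceptional $J$ can be spliced so as to produce a brick with exactly one $b$-invariant edge yields a graph again in the four-member exceptional family. I expect this exceptional case analysis, controlled via a careful count of removable edges using the barriers through $e$ and the restricted $3$-cut structure of the four exceptions, to be the principal technical obstacle of the proof.
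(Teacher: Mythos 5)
The theorem you were asked to prove is quoted from Carvalho, Lucchesi and Murty \cite{CLM2002}; the present paper cites it as known and contains no proof of it, so there is no ``paper's own proof'' to compare against.

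Taken on its own, your proposal is an outline rather than an argument, and the two places where you flag difficulty are exactly where the theorem lives. First, the lifting step is not automatic: if $f$ is a $b$-invariant edge of the brick $J$ obtained from $H=G-e$ by tight cut decomposition, the corresponding edge of $G$ need not even be removable in $G$, and in general $b(G-f)$ bears no simple relation to $b(J-f)$; removability and $b$-invariance do not commute with tight-cut contractions without a substantial extra structural argument, which your sketch does not supply. Second, the exceptional cases are not a cleanup step at the end: when $J\in\{K_4,\overline{C_6},R_8,\text{Petersen}\}$ your induction gives nothing, and the tools you gesture at (McCuaig and Norine--Thomas generation theorems) concern brace generation and Pfaffian orientations rather than arbitrary brick reconstruction. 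The actual proof in \cite{CLM2002} does not run on an induction-and-lift scheme at all; it introduces the \emph{characteristic} of a matching covered graph, develops an ear-decomposition theory for bricks, and derives the existence of $b$-invariant edges (and the two-edge strengthening) as consequences of that machinery. Your plan captures a plausible shape for an induction, but it contains no argument for either of the two steps on which everything depends, and it is not the route the cited authors took.
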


For specific families of bricks, the number of $b$-invariant edges is  related to the number of vertices. For instance,
Carvalho et al. \cite{CLM2012} showed that every solid brick other than $K_4$ has at least $\frac{|V(G)|}{2}$ $b$-invariant edges.
Kothari et al. \cite{KCLL2020} proved that every essentially 4-edge-connected cubic non-near-bipartite brick $G$, other than the Petersen graph, has at least $|V(G)|$ $b$-invariant edges.
They further conjectured that every essentially 4-edge-connected cubic near-bipartite brick $G$,
other than $K_4$, has at least $\frac{|V(G)|}{2}$ $b$-invariant edges.
Lu et al. \cite{LFW2020} confirmed this conjecture and characterized all the  graphs attaining this lower bound, which are prism with $4k+2$ vertices and M\"{o}bius ladder with $4k$ vertices, where $k\geq2$.

An edge of a graph $G$ is {\it solitary} if it is contained in precisely one perfect matching of $G$, and is {\it nonsolitary} otherwise.
Recently, Lucchesi and Murty proposed the following problem, see Unsolved Problems in \cite{LM2024}.

\begin{pro}[\cite{LM2024}]\label{pro}
Characterize bricks, distinct from $K_4$, $\overline{C_6}$ and the Petersen graph, in
which every $b$-invariant edge is solitary.
\end{pro}

\begin{figure}[h]
 \centering
 \includegraphics[width=\textwidth]{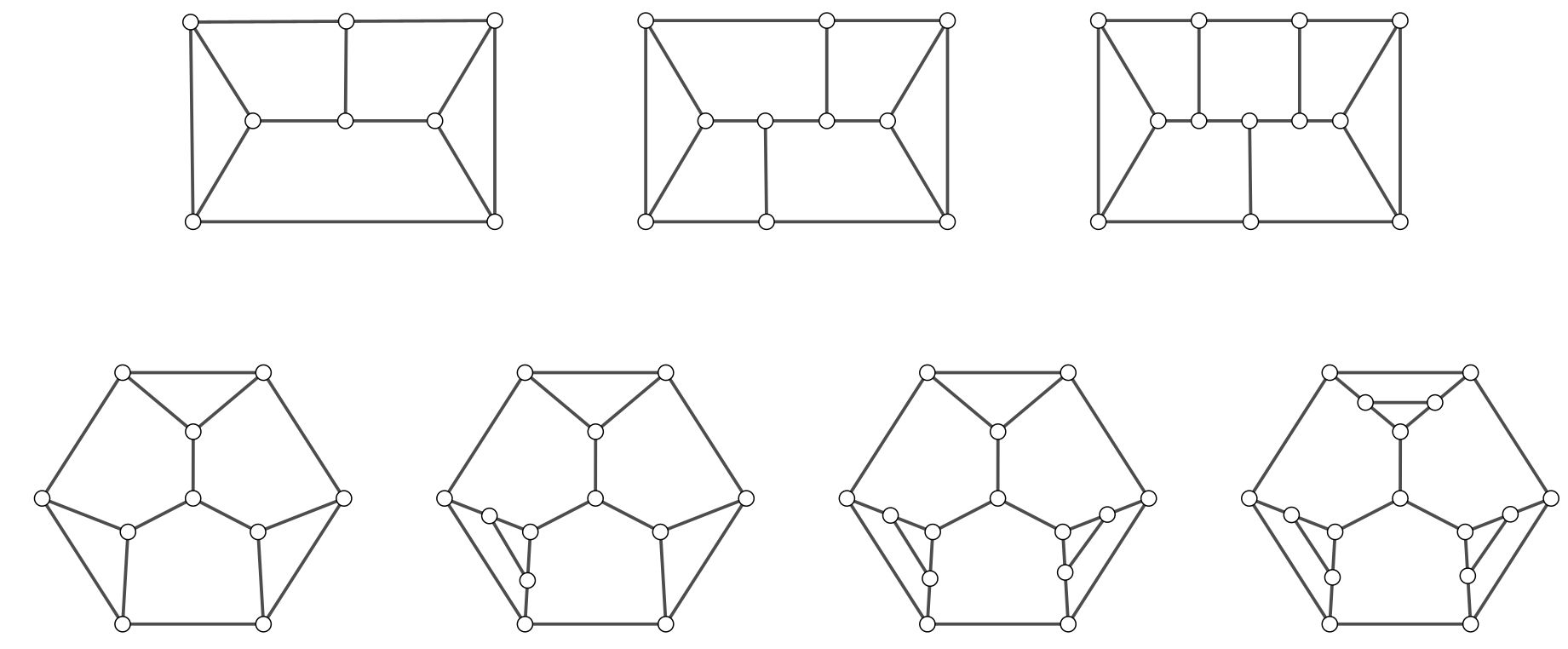}\\
 \caption{The family $\mathcal{F}$.}\label{fig2}
\end{figure}

\begin{figure}[h]
 \centering
 \includegraphics[width=\textwidth]{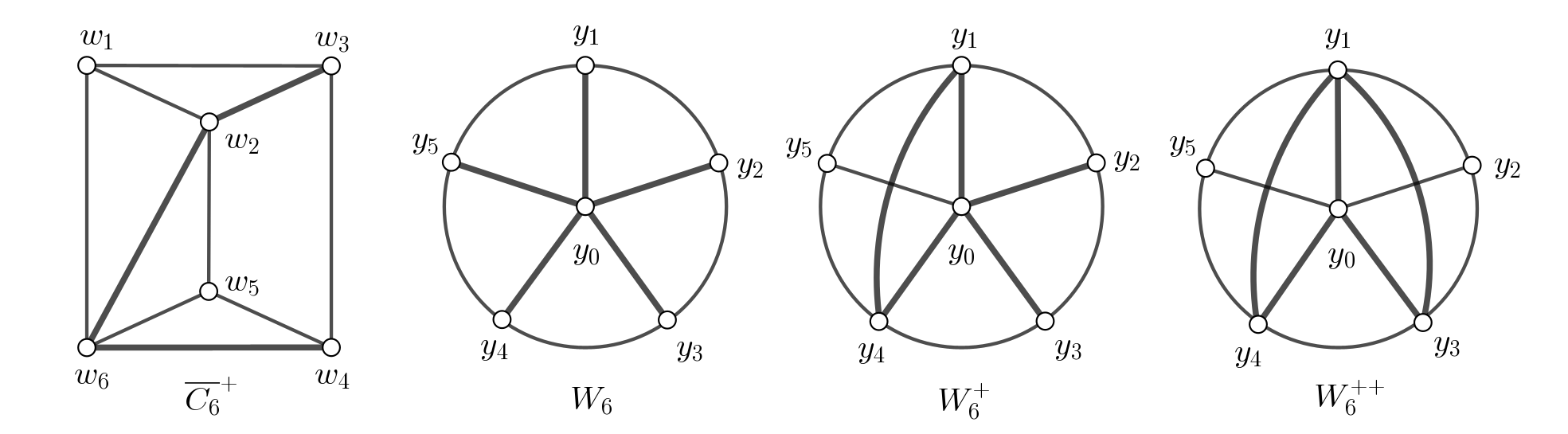}\\
 \caption{The family $\mathcal{G}$.}\label{fig3}
\end{figure}

For the Problem \ref{pro}, Zhang et al. \cite{ZLZ2025+} established that the cubic bricks satisfying the above property are precisely the graphs in family $\mathcal{F}$, as shown in Figure \ref{fig2}.
Note that every graph in $\mathcal{F}$ has a claw.
In this paper, we characterized the case when bricks are claw-free and obtained  all the graphs that belong to $\mathcal{G}$, as shown in Figure \ref{fig3}.
The following is our main result.

\begin{theorem}\label{main-theorem}
Let $G$ be a claw-free brick distinct from $K_4$ and $\overline{C_6}$. Then every $b$-invariant edge of $G$ is solitary if and only if $G\in\mathcal{G}$.
\end{theorem}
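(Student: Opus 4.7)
The approach is to verify the two directions of Theorem~\ref{main-theorem} separately. For the sufficiency direction, I would proceed by direct inspection of the family $\mathcal{G}$ shown in Figure~\ref{fig3}: for each graph $G\in\mathcal{G}$, one lists its perfect matchings (exploiting the ambient symmetries to keep the case-work short), identifies the removable edges, and among those picks out the $b$-invariant ones by checking that $b(G-e)=1$. For each such edge $e$ it then remains to count the perfect matchings containing $e$ and confirm that this count is exactly one. This reduces to a finite graph-by-graph verification.

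For the necessity direction, suppose $G$ is a claw-free brick distinct from $K_4$ and $\overline{C_6}$ in which every $b$-invariant edge is solitary. First I would dispose of the cubic case: by the result of Zhang et al.~\cite{ZLZ2025+}, a cubic brick with this property, distinct from $K_4$, $\overline{C_6}$ and the Petersen graph, must belong to $\mathcal{F}$; since every member of $\mathcal{F}$ contains an induced claw, the claw-free hypothesis on $G$ forces $G$ to be the Petersen graph, which one verifies lies in $\mathcal{G}$ (vacuously, as it admits no $b$-invariant edge). In the remaining case $G$ contains a vertex of degree at least $4$, and I would combine two ingredients: first, claw-freeness forces $G[N(u)]$ to have independence number at most two for every vertex $u$, so that $N(u)$ is covered by two cliques and each non-edge within $N(u)$ contributes a triangle through $u$; second, Theorem~\ref{2-b-invariant-edge} supplies at least two $b$-invariant edges of $G$ (the single exception $R_8$ being checked directly, noting in particular whether $R_8$ is claw-free).

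For each $b$-invariant edge $e=xy$, the solitary condition says that $G-\{x,y\}$ has a unique perfect matching, which severely constrains the alternating cycles through $e$. I would then analyze how these solitary $b$-invariant edges interact with the forced triangles around $x$ and $y$, aiming either to reconstruct $G$ up to isomorphism as one of the graphs of $\mathcal{G}$, or else to derive a contradiction with $3$-connectivity, claw-freeness, or the absence of nontrivial tight cuts. A natural intermediate target is to bound $|V(G)|$ in terms of the maximum degree using the forced local triangles, after which the remaining possibilities can be checked by hand.

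The main obstacle is expected to lie in this non-cubic case. Solitariness is a strong but intrinsically non-local matching property, and translating it—together with the local clique structure forced by claw-freeness—into rigid global combinatorial constraints is delicate. A secondary technical difficulty is to ensure that every plausible claw-free brick of mixed degree sequence either appears in $\mathcal{G}$ or is ruled out by exhibiting some $b$-invariant edge that fails to be solitary; identifying such a non-solitary edge in an unspecified brick requires careful control of alternating cycles, and an inductive or ear-decomposition argument may well be unavoidable.
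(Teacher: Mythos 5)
Your sufficiency sketch matches the paper's Lemma~\ref{sufficiency}: a finite verification, edge by edge, over the four graphs in $\mathcal{G}$.

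For the necessity direction, however, there are two concrete problems. First, a factual error: you assert that the Petersen graph is claw-free, but it is not. The Petersen graph has girth $5$ and is therefore triangle-free, so for every vertex the three neighbours form an independent set and each vertex is the centre of an induced $K_{1,3}$. The paper invokes claw-freeness precisely to rule out the Petersen graph (and $R_8$) before applying Theorem~\ref{2-b-invariant-edge}. Your sketch instead concludes that the cubic case \emph{must} produce the Petersen graph and then asserts, also incorrectly, that the Petersen graph lies in $\mathcal{G}$. Since $\mathcal{G}$ is the explicit four-element family $\{\overline{C_6}^{+}, W_6, W_6^{+}, W_6^{++}\}$, this claim is false, and if your premises were accepted the theorem would be contradicted rather than proved. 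The cubic sub-case is in fact vacuous, but your argument does not establish that.

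Second, and more fundamentally, the core of your necessity argument is not a proof but a programme. You correctly observe that solitariness of a $b$-invariant edge $uv$ makes $G-\{u,v\}$ have a unique perfect matching, and you correctly note the clique-cover structure of neighbourhoods forced by claw-freeness, but you never identify the structural engine that makes this tractable. The paper's key move is Kotzig's lemma (Lemma~\ref{connected-with-upm}): a connected graph with a unique perfect matching has a bridge in that matching. Applied to $G-\{u,v\}$, this yields a nontrivial block tree $H$ after contracting each maximal $2$-edge-connected subgraph. The paper then shows $H$ is a path (claw-freeness kills three-leaf configurations), bounds its length between $2$ and $4$, eliminates lengths $2$ and $3$ by further case analysis, and reconstructs $G$ from the length-$4$ path to land exactly on $\mathcal{G}$. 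Your outline of ``analyzing alternating cycles and local triangles'' gestures in this direction but never arrives at the bridge/block-tree decomposition, which is where the actual rigidity comes from; without it the argument does not close. Your own closing paragraph acknowledges this gap (``an inductive or ear-decomposition argument may well be unavoidable''), but what is actually needed is Kotzig's bridge lemma together with the block-tree path analysis.
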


We organize the rest of the paper as follows. In Section \ref{section 2}, we present some lemmas. In Section \ref{section 3}, we give a proof of Theorem \ref{main-theorem}.

\section{Preliminaries}\label{section 2}

In the remainder of this paper, unless stated otherwise, we use $N(X)$ and $E[X,Y]$  instead of $N_G(X)$ and $E_G[X,Y]$, respectively, where $X,Y\subseteq V(G)$.
An edge $e$ of a connected  graph $G$ is called a \emph{bridge} if $G-e$ is disconnected.
The following lemma gives an important property of connected graphs admitting exactly one perfect matching.

\begin{lemma} [\cite{Kotzig1959}]\label{connected-with-upm}
If a connected graph $G$ has a unique perfect matching, then it has a bridge belonging to the perfect matching.
\end{lemma}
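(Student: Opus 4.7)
The plan is induction on $|V(G)|$. The base case $|V(G)|=2$ is immediate, since the single edge of $G$ belongs to $M$ and is trivially a bridge. For the inductive step with $|V(G)|\ge 4$, I would argue by contradiction: assume no edge of $M$ is a bridge of $G$, and aim to construct an $M$-alternating cycle in $G$, since toggling $M$ along such a cycle produces a perfect matching distinct from $M$, contradicting uniqueness.

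The main device is a longest $M$-alternating path. Let $P=v_0v_1\cdots v_k$ be chosen longest among $M$-alternating paths starting with $v_0v_1\in M$. First I would show $k$ is odd, so that $v_{k-1}v_k\in M$: otherwise $v_{k-1}v_k\notin M$ and the unique $M$-partner $w$ of $v_k$ either lies off $P$ (immediately extending $P$ by the edge $v_kw$ and contradicting maximality) or lies on $P$, in which case the alternation pattern forces $w$ to be the $M$-partner of some $v_i\in V(P)$, pinning $w=v_{k-1}$ and contradicting $v_{k-1}v_k\notin M$.

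Having established $v_{k-1}v_k\in M$, the non-bridge assumption gives $v_k$ a further neighbor $y\neq v_{k-1}$, and maximality of $P$ forces $y=v_j\in V(P)$ for some $j\le k-2$ with $v_kv_j\notin M$. When $j$ is even, the closed walk $v_jv_{j+1}\cdots v_kv_j$ is a cycle of even length $k-j+1$ whose edges alternate between $M$ and non-$M$, so it is the required $M$-alternating cycle and toggling gives the desired contradiction.

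The main obstacle is the subcase where every non-$M$-neighbor of $v_k$ on $P$ has odd index $j$. My plan is to rearrange the suffix of $P$ to form $v_0\cdots v_jv_kv_{k-1}\cdots v_{j+1}$, which is again a longest $M$-alternating path starting with an $M$-edge, but now terminating at $v_{j+1}$, and to iterate the preceding analysis at the new endpoint. Either some iteration supplies an even-indexed non-$M$-neighbor, yielding the alternating cycle, or the iteration stabilizes on a fixed vertex set; in the latter situation I would combine the non-bridge hypothesis with a monovariant on endpoint positions (for instance, the lexicographic minimum index in the set of persistent endpoints) to force an $M$-alternating cycle inside that stabilized vertex set, again giving a second perfect matching. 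Making this termination/separation argument rigorous is the most delicate step of the proof.
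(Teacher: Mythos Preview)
The paper does not prove this lemma; it is quoted from Kotzig (1959) without argument, so there is no in-paper proof to compare against. I can only assess your sketch on its own merits.

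The induction frame is never used: you do not pass to a smaller graph anywhere, so the argument is really a direct attempt to locate an $M$-alternating cycle. Up to and including the case where the endpoint $v_k$ has a non-$M$-neighbour $v_j$ with $j$ even, the reasoning is correct.

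The odd-$j$ subcase is a genuine gap, and you say so yourself. The rotated path $v_0\cdots v_jv_kv_{k-1}\cdots v_{j+1}$ is indeed another longest $M$-alternating path ending in an $M$-edge, but the iteration you describe can cycle rather than progress. Concretely, if the new endpoint $v_{j+1}$ has degree~$2$ in $G$ (neighbours $v_j$ and $v_{j+2}$ only), then its sole non-$M$-neighbour on the rotated path is $v_j$, which again occupies position $j$ (odd); one further rotation returns exactly the original path $P$. So the process can oscillate with period two forever, and your unspecified ``monovariant on endpoint positions'' does not exclude this. A correct completion along these lines typically brings the other endpoint $v_0$ into the analysis as well (it too has a non-$M$-chord on $P$) and compares the positions of the two chords, but this requires a careful case split that your proposal does not carry out. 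As written, the proof is incomplete at precisely the point you flag as ``most delicate''.
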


\begin{lemma}\label{triangle-nonremovable}
Let $G$ be a matching covered graph that contains a triangle $C=u_1u_2u_3u_1$.
If  some vertex $u_i$ of $C$ has exactly one neighbour $v$ in $V(G)\backslash V(C)$, then $u_iv$ is nonremovable in $G$, where $i\in\{1,2,3\}$.
\end{lemma}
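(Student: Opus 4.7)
The plan is to argue by contradiction. Without loss of generality suppose $i=1$, so the only neighbours of $u_1$ in $G$ are $u_2$, $u_3$ and $v$. Assume $u_1v$ is removable; then $G-u_1v$ is matching covered, which means every edge of $G-u_1v$ lies in some perfect matching of $G-u_1v$.

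The main step is to focus on the edge $u_2u_3$, which is precisely the edge of the triangle $C$ not incident to $u_1$. Since $u_2u_3 \in E(G-u_1v)$ and $G-u_1v$ is matching covered, there exists a perfect matching $M$ of $G-u_1v$ with $u_2u_3 \in M$. Now I would examine how $u_1$ can be saturated by $M$: in $G-u_1v$ its neighbourhood is just $\{u_2,u_3\}$ (since the edge to $v$ has been deleted and it has no other neighbours outside $V(C)$ by hypothesis). Both $u_2$ and $u_3$ are already saturated by the edge $u_2u_3 \in M$, so there is no vertex in $N_{G-u_1v}(u_1)$ available to match $u_1$, contradicting the fact that $M$ is a perfect matching.

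This contradiction shows $G-u_1v$ cannot be matching covered, hence $u_1v$ is nonremovable, as required. I expect no serious obstacle: the argument is essentially a one-line parity/degree observation, the only thing to be careful about is stating clearly that after deleting $u_1v$ the vertex $u_1$ has exactly the two neighbours $u_2, u_3$, which is exactly the hypothesis on $u_i$ having a unique neighbour outside the triangle.
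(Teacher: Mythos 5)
Your proof is correct and uses essentially the same idea as the paper: once $u_2u_3$ is in a perfect matching, the vertex $u_1$ (whose only remaining neighbour in $G-u_1v$ would be $u_2$ or $u_3$) cannot be saturated, so $u_2u_3$ lies in no perfect matching of $G-u_1v$. The paper states this directly rather than via contradiction, but the argument is the same.
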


\begin{proof}
Assume, without loss of generality, that the vertex $u_1$ has exactly one neighbour $v$ in $V(G)\backslash V(C)$. Then $N(u_1)=\{u_2,u_3,v\}$. Note that every perfect matching of $G$ that contains $u_2u_3$ must contain $u_1v$, which implies that $u_2u_3$ belongs to no perfect matching of $G-u_1v$.
It follows that $G-u_1v$ is not matching covered, and hence $u_1v$ is nonremovable in $G$.
The result holds.
\end{proof}

\begin{lemma}\label{sufficiency}
If $G\in\mathcal{G}$, then every $b$-invariant edge of $G$ is solitary.
\end{lemma}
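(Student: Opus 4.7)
The plan is to proceed by case analysis on the members of the family $\mathcal{G}$ depicted in Figure \ref{fig3}. For each graph $G\in\mathcal{G}$, I would first narrow down the set of candidate $b$-invariant edges. Since a $b$-invariant edge is by definition removable, every nonremovable edge can be ruled out at the outset. Lemma \ref{triangle-nonremovable} is the key filter here: whenever $G$ contains a triangle $u_1u_2u_3$ and some vertex $u_i$ has exactly one neighbour $v$ outside the triangle, the edge $u_iv$ is nonremovable. Because each member of $\mathcal{G}$ is claw-free and therefore densely populated with triangles (a claw-free graph of minimum degree $\ge 3$ has little room to avoid them), this lemma is expected to eliminate the bulk of edges. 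Exploiting the evident automorphisms of each $G\in\mathcal{G}$ should then cut the remaining candidate edges down to a small number of orbits that one can handle by hand.

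For each surviving candidate edge $e=uv$, the task is to show that $e$ lies in exactly one perfect matching of $G$. The perfect matchings of $G$ containing $e$ are in bijection with the perfect matchings of $G-\{u,v\}$, so it suffices to prove that $G-\{u,v\}$ has a unique perfect matching. For this I would argue by forced-edge peeling, bootstrapped by Lemma \ref{connected-with-upm}: after removing $u$ and $v$, the triangles of $G$ typically leave behind vertices of degree one (namely, the third vertex of any triangle through $uv$ that loses its other two neighbours) or degree two whose bridges are forced; matching these, iterating, and using the connectivity of $G-\{u,v\}$ should exhaust the graph and pin down the matching uniquely. When the graph under consideration has a small explicit size, the peeling is finite and direct; when $\mathcal{G}$ contains a parametric subfamily, the same peeling can be packaged as an induction along the repeating block.

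The main obstacle will be the bookkeeping of the case analysis rather than any single hard idea. First, I have to be genuinely exhaustive in locating the $b$-invariant edges of each $G\in\mathcal{G}$, which I would cross-check against Theorem \ref{2-b-invariant-edge} to make sure I am not missing one (the theorem guarantees at least two $b$-invariant edges in every $G\in\mathcal{G}$ except possibly $R_8$). Second, for any infinite subfamily in $\mathcal{G}$, I must ensure that the forced-edge peeling inside $G-\{u,v\}$ really does propagate all the way through the repeating units without branching; this requires verifying locally that at every stage of the peeling there is a unique forced edge, which reduces to a finite check inside one block. If both of these verifications go through for every graph and every candidate edge in $\mathcal{G}$, the lemma follows.
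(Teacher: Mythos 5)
Your overall strategy (use Lemma \ref{triangle-nonremovable} to filter out nonremovable edges and then show the survivors are solitary) is close to the paper's, but it contains a genuine gap: your filter is too coarse. Being removable is necessary but not sufficient for being $b$-invariant, and $\mathcal{G}$ contains an instance where this distinction matters in a way that would make your plan fail outright. Concretely, $\mathcal{G}$ is the finite family $\{\overline{C_6}^+,\,W_6,\,W_6^+,\,W_6^{++}\}$ (all on six vertices, so your worries about an infinite parametric subfamily are moot), and in $W_6^{++}$ the edge $y_3y_4$ is removable but \emph{not} solitary: after deleting $y_3$ and $y_4$, the remaining four vertices admit two perfect matchings ($\{y_0y_2,\,y_1y_5\}$ and $\{y_0y_5,\,y_1y_2\}$). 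Your plan of ``filter by removability, then verify uniqueness of the matching in $G-\{u,v\}$'' would reach $y_3y_4$ and attempt to prove a false statement. The paper avoids this by proving the stronger claim that the set of $b$-invariant edges is contained in an explicitly listed set $A$ of solitary edges, and it disposes of $y_3y_4$ not by showing it solitary but by computing that $G-y_3y_4$ has a nontrivial tight cut whose two contractions are both $K_4$, hence $b(G-y_3y_4)=2\neq 1$, so $y_3y_4$ is removable yet fails to be $b$-invariant. To repair your proposal you must add a second filter: for each removable edge $e$ not on your list, verify that $b(G-e)\neq 1$ by exhibiting a nontrivial tight cut in $G-e$.

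A second, smaller issue: Lemma \ref{triangle-nonremovable} alone does not eliminate everything it needs to. In $\overline{C_6}^+$ the two edges $w_1w_2$ and $w_5w_6$ are nonremovable, but Lemma \ref{triangle-nonremovable} does not apply to them; the paper rules them out by a direct observation that any perfect matching containing one of them must contain the other, so neither survives deletion of the other in a matching covered graph. Your ``plus automorphisms'' heuristic might or might not stumble onto this, but it is not a consequence of the lemma you are relying on. Finally, your proposed cross-check via Theorem \ref{2-b-invariant-edge} is confused: $R_8$ is one of the four exceptional bricks in Figure \ref{fig1}, not a member of $\mathcal{G}$, so the exception clause you mention never arises here.
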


\begin{proof}
Assume that $G\in\mathcal{G}$. Then $G$ is  claw-free.
Since  $\overline{C_6}$ and $W_6$ are bricks, by the definition of bricks, $G$ is a brick.
It follows that $G$ is matching covered.
Let
\begin{equation}
A=\begin{cases}
\{w_2w_3,w_2w_6,w_4w_6\}, & if\ G\ is\ \overline{C_6}^+\\
\{y_0y_1,y_0y_2,y_0y_3,y_0y_4,y_0y_5\}, & if\ G\ is\ W_6\\
\{y_0y_1,y_0y_2,y_0y_3,y_0y_4,y_1y_4\}, & if\ G\ is\ W_6^+\\
\{y_0y_1,y_0y_3,y_0y_4,y_1y_3,y_1y_4\}, & if\ G\ is\ W_6^{++}.\\
\end{cases}
\nonumber
\end{equation}
Note that each edge of $A$ is solitary in $G$. We shall show that  all the $b$-invariant edges of $G$ are contained in the set $A$, which implies that each $b$-invariant edge of $G$ is solitary. Hence, the result holds.

If $G$ is $\overline{C_6}^+$, then by Lemma \ref{triangle-nonremovable},
each edge of $E(G)\backslash(A\cup\{w_1w_2,w_5w_6\})$ 
is nonremovable in $G$.
Observe that each perfect matching of $G$ that contains $w_1w_2$ must contain $w_5w_6$, and each perfect matching of $G$ that contains $w_5w_6$ must contain $w_1w_2$. This implies that the two edges $w_1w_2$ and $w_5w_6$ are both nonremovable in $G$.
Recall that every $b$-invariant edge  is removable.
Hence, all the $b$-invariant edges of $G$ are contained in the set $A$.

If $G$ is one of the two graphs $W_6$ and $W_6^+$, then again by Lemma \ref{triangle-nonremovable}, each edge of $E(G)\backslash A$ is nonremovable in $G$.
It follows that all the $b$-invariant edges of $G$ are contained in the set $A$.

We now assume that $G$ is $W_6^{++}$.
Then by Lemma \ref{triangle-nonremovable}, each edge of $E(G)\backslash (A\cup\{y_3y_4\})$ is nonremovable in $G$.
Observe that $G'=G-\{y_3y_4\}$ is matching covered and it has a nontrivial tight cut $\partial_{G'}(X)$, where $X=\{y_1,y_4,y_5\}$.
Moreover, the underlying simple graphs of the two $\partial_{G'}(X)$-contractions of $G'$ are both $K_4$.
Hence, we have $b(G')=2$, which implies that the edge $y_3y_4$ is not a $b$-invariant edge of $G$.
It follows that all the $b$-invariant edges of $G$ are contained in the set $A$.
\end{proof}

In fact, one may easily verify that each edge of $A$ is a $b$-invariant edge of $G$. So the set of all the $b$-invariant edges of $G$ is  precisely the set $A$.

\section{Proof of Theorem \ref{main-theorem}}\label{section 3}

Let $G$ be a claw-free brick distinct from $K_4$ and $\overline{C_6}$.
If $G\in\mathcal{G}$, then by Lemma \ref{sufficiency}, every $b$-invariant edge of $G$ is solitary.
The sufficiency holds.

Next, we prove the necessity.  Suppose that every $b$-invariant edge of $G$ is solitary.
We shall show that $G\in\mathcal{G}$.
Since $G$ is a  brick, $G$ is 3-connected.
Further, $G$ is different from $R_8$ (see Figure \ref{fig1}) and the Petersen graph because $G$ is claw-free.
By Theorem \ref{2-b-invariant-edge}, $G$ has two $b$-invariant edges, say one of them is $uv$.
By the assumption, $uv$ is solitary in $G$, implying that $G-\{u,v\}$ has a unique perfect matching, say $M$.
Moreover, the graph $G-\{u,v\}$ is connected since $G$ is 3-connected.
Using Lemma \ref{connected-with-upm}, $G-\{u,v\}$ has a bridge belonging to $M$.
Let $H$ denote the graph obtained from $G-\{u,v\}$ by contracting each   maximal 2-edge-connected subgraph  to a vertex. Then $H$ is a tree with at least two vertices, and each edge of $H$ corresponds to a bridge of $G-\{u,v\}$.
We first show the following claim.

\begin{claim} \label{H-path}
The graph $H$ is a path.
\end{claim}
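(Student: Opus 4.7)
The plan is to show that every vertex of $H$ has degree at most $2$, which forces the tree $H$ to be a path. I would assume for contradiction that some vertex $w$ of $H$ satisfies $\deg_H(w)\geq 3$, and let $B$ be the maximal 2-edge-connected subgraph of $G-\{u,v\}$ that is contracted to $w$. Then $B$ is incident to at least three bridges of $G-\{u,v\}$; I would pick three of them $e_1,e_2,e_3$ lying in three different branches of $H$ at $w$, writing $e_i=x_iy_i$ with $x_i\in V(B)$ and $y_i\notin V(B)$. Let $C_i\subseteq V(G-\{u,v\})\setminus V(B)$ denote the vertex set of the $i$-th branch of $H$ at $w$, so that $y_i\in C_i$ and $C_1,C_2,C_3$ are pairwise disjoint.

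The first step would be a structural observation coming from the bridge-block decomposition: for $i\neq j$ no edge of $G-\{u,v\}$ joins $C_i$ and $C_j$, and the unique edge of $G-\{u,v\}$ between $C_i$ and $V(B)$ is $e_i$ itself. Any extra such edge would either create a cycle in $H$ (if it is a bridge) or lie inside a single 2-edge-connected subgraph spanning two distinct vertices of $H$ (if it is not), both impossible. Since $C_i\cup C_j\subseteq V(G-\{u,v\})$, this also rules out every edge of $G$ between $C_i$ and $C_j$.

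Next, I would invoke the $3$-connectivity of $G$. For each $i\in\{1,2,3\}$, the set $\{x_i,v\}$ has size $2$, so $G-\{x_i,v\}$ must remain connected. In $G-\{x_i,v\}$, the only surviving edges out of $C_i$ are those joining $C_i$ to $u$: the bridge $e_i$ is destroyed when $x_i$ is removed, the vertex $v$ itself is deleted, and by the structural step there is no other edge of $G$ leaving $C_i$. Connectivity therefore forces $u$ to have a neighbour $u_i\in C_i$ for each $i$. The vertices $u_1,u_2,u_3$ are pairwise non-adjacent in $G$ by the structural step, so $\{u,u_1,u_2,u_3\}$ induces a $K_{1,3}$ in $G$, contradicting the claw-freeness of $G$.

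The main point I expect to need care is the structural step: verifying that after deleting $\{x_i,v\}$ the branch $C_i$ has genuinely no surviving connection to the rest of $G$ other than through $u$. This uses both the bridge property of $e_i$ and the fact that different branches of $H$ at $w$ are separated in $G-\{u,v\}$. Once this isolation is in place, the claw drops out at once and the contradiction is immediate, so the rest of the argument is essentially bookkeeping.
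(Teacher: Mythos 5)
Your proof is correct and follows essentially the same strategy as the paper: identify three pairwise non-adjacent subsets of $V(G-\{u,v\})$ (you use the three branches of $H$ at a vertex of degree at least $3$, whereas the paper uses the blocks contracted to three leaves of $H$), use $3$-connectivity to force $u$ to have a neighbour in each, and then obtain a claw centered at $u$. The isolation step and the cut argument you give mirror the paper's reasoning almost exactly, with only a cosmetic difference in how the tree structure of $H$ is invoked.
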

Assume, to the contrary, that $H$ is not a path. Then $H$ has at least three leaves.
Let $G_1$, $G_2$ and $G_3$ be the three subgraphs of $G-\{u,v\}$ that are contracted to three leaves $v_1$, $v_2$ and $v_3$ of \ $H$.
Then $E[V(G_1),V(G_2)]=E[V(G_1),V(G_3)]=E[V(G_2),V(G_3)]=\emptyset$.
Suppose that one of $u$ and $v$, say $u$, has no neighbour in $G_i$, say $G_1$.
Let $u_1$ be the only neighbour of $v_1$ in $H$ and $e_1$ be the bridge of  $G-\{u,v\}$ that corresponds to $u_1v_1$.
We also use $u_1$ to denote the end of $e_1$ that does not lie in $G_1$.
Then $\{v, u_1\}$ is a vertex cut of $G$, a contradiction to the assumption that $G$ is 3-connected.
So  each of $u$ and $v$ has at least one neighbour in $G_i$, where $i=1,2,3$.
This implies that $G$ contains claws centered at the vertices $u$ and $v$, respectively, a contradiction.
Claim \ref{H-path} holds.
\vspace{2mm}

By Claim \ref{H-path},  $H$ is a path and then let $H=z_1z_2\cdots z_{h}$, where $h=|V(H)|\geq2$.
Let $G_i$ be the subgraph of $G-\{u,v\}$ that is contracted to the vertex $z_i$ of $H$, $i=1,2,\ldots,h$.
Combining with the fact that $G$ is 3-connected,
each of $u$ and $v$ has  neighbours in both $G_1$ and $G_{h}$, and if $h\geq3$,
then for each $G_j$,  at least one of $u$ and $v$ has a neighbour in it, where $2\leq j\leq  h-1$.

We assume that $h=4$, which can be proved later.
Since $H$ is path, we have $$E[V(G_1),V(G_3)]=E[V(G_1),V(G_4)]=E[V(G_2),V(G_4)]=\emptyset$$
and $$|E[V(G_1),V(G_2)]|=|E[V(G_2),V(G_3)]|=|E[V(G_3),V(G_4)]|=1.$$
Note  that each of $u$ and $v$ has  neighbours in both $G_1$ and $G_4$, and for each of $G_2$ and $G_3$, at least one of $u$ and $v$ has a neighbour in it.

Let $u_1$ and $u_4$ denote neighbours of $u$ in $G_1$ and $G_4$, respectively.
Suppose, without loss of generality, that $u$ has a neighbour $u_2$ in $G_2$.
Then we have $u_1u_4,u_2u_4\notin E(G)$.
Since $G$ is claw-free, we can derive that $u_1u_2\in E(G)$.
Since $|E[V(G_1),V(G_2)]|=1$, we have $E[V(G_1),V(G_2)]=\{u_1u_2\}$.
It follows that $N(u)\cap V(G_1)=\{u_1\}$ and $N(u)\cap V(G_2)=\{u_2\}$ since $G$ is claw-free.
If $|V(G_1)|\geq2$, then $G-\{u_1,v\}$ is disconnected with one  component contained in $G_1-u_1$, a contradiction to the fact that $G$ is 3-connected.
Thus $|V(G_1)|=1$, and then $V(G_1)=\{u_1\}$.
Consequently, we have $N(u_1)=\{u,v,u_2\}$.
Since $v$ has a neighbour in $G_4$ and $G$ is claw-free, we can obtain that $N(v)\cap V(G_2)\subseteq \{u_2\}$.
Let $a$ be an end of the only edge of $E[V(G_2),V(G_3)]$ with $a\in V(G_3)$.
If $|V(G_2)|\geq2$, then $G-\{u_2,a\}$ is disconnected  with one  component contained in $G_2-u_2$, a contradiction.
It follows  that  $V(G_2)=\{u_2\}$.
We next consider the following two cases according to whether $v$ has a neighbour in $G_3$ or not.

\begin{figure}[h]
 \centering
 \includegraphics[width=0.8\textwidth]{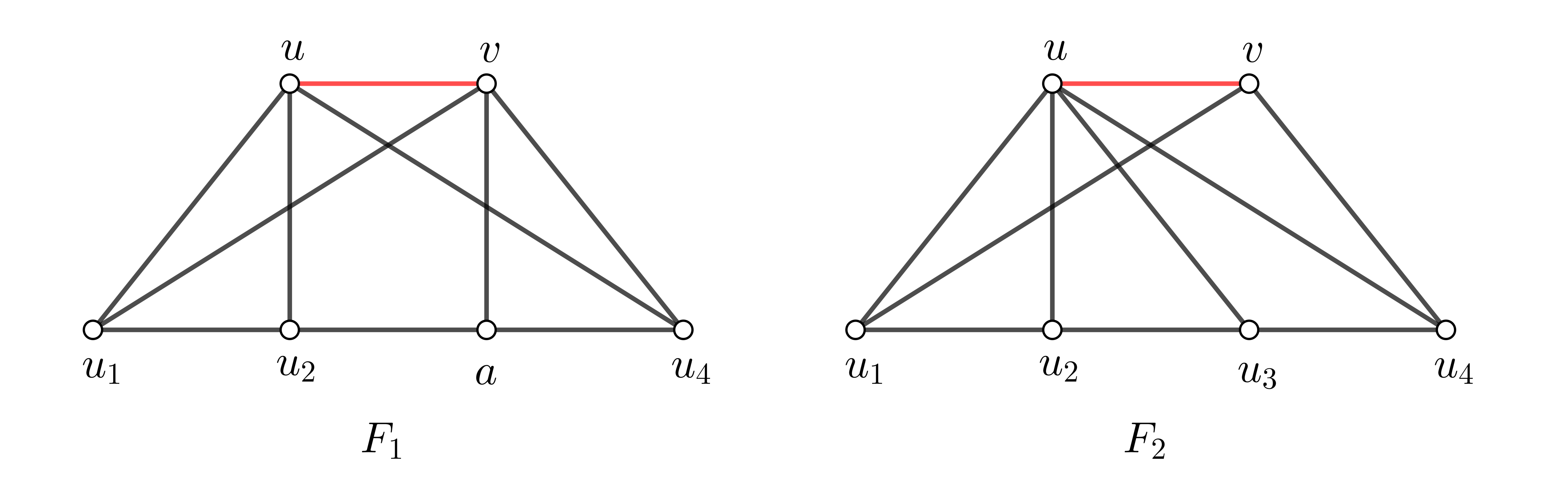}\\
 \caption{The two graphs $F_1$ and $F_2$.}\label{fig6}
\end{figure}

If the vertex $v$ has a neighbour in $G_3$, by symmetry,
we can derive that  $V(G_3)=\{a\}$ and $V(G_4)=\{u_4\}$.
Moreover, we have $N(u_4)=\{u,v,a\}$.
It follows that $F_1$ (see Figure \ref{fig6}) is a spanning subgraph of $G$.
If $ua, vu_2\notin E(G)$, then $G$ is isomorphic to $\overline{C_6}^+$.
If only one of $ua$ and $vu_2$ belongs to $E(G)$, then $G$ is isomorphic to $W_6^+$.
If $ua, vu_2\in E(G)$, then $G$ is isomorphic to $W_6^{++}$.

We now consider the vertex $v$ has no neighbour in $G_3$.
Then the vertex $u$ must have a neighbour in $G_3$, say $u_3$.
Note that $u_1u_3,u_1u_4\notin E(G)$.
Since $G$ is claw-free, we have $u_3u_4\in E(G)$, which implies that
$E[V(G_3),V(G_4)]=\{u_3u_4\}$.
Again using $G$ is claw-free, we can obtain that $N(u)\cap V(G_3)=\{u_3\}$ and $N(u)\cap V(G_4)=\{u_4\}$.
If $|V(G_3)|\geq2$, then $G-\{u_2,u_3\}$  is disconnected;
if $|V(G_4)|\geq2$, then $G-\{u_4,v\}$  is disconnected.
In both cases, we get a contradiction to the fact that $G$ is 3-connected.
It follows that  $V(G_3)=\{u_3\}$ and $V(G_4)=\{u_4\}$.
So $vu_3\notin E(G)$.
Thus $F_2$ (see Figure \ref{fig6}) is a spanning subgraph of $G$.
If $vu_2\notin E(G)$, then $G$ is isomorphic to $W_6$; otherwise, $G$ is isomorphic to $W_6^+$.
The proof of Theorem \ref{main-theorem} is completed.
\vspace{4mm}

In the following we show  $h =4$. We first show $2\leq h \leq4$, and then show $ h \neq3$ and  $ h \neq2$.

\begin{lemma}\label{2-VH-4}
$2\leq h \leq4$. 
\end{lemma}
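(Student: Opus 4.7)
The plan is to prove the two inequalities separately. The lower bound $h \geq 2$ is part of the setup (the tree $H$ has at least two vertices, since by Lemma~\ref{connected-with-upm} the connected graph $G-\{u,v\}$ contains a bridge and so has at least two maximal $2$-edge-connected subgraphs), so only the upper bound requires real argument.

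For $h \leq 4$ I would argue by contradiction, assuming $h \geq 5$. Two facts already recorded in the discussion preceding the lemma will do all the work. First, since $H$ is a path, the only edges of $G-\{u,v\}$ joining distinct blocks are the bridges between consecutive blocks; in particular $E[V(G_i),V(G_j)] = \emptyset$ whenever $|i-j|\geq 2$. Second, since $G$ is $3$-connected, each of $u$ and $v$ has a neighbour in both $V(G_1)$ and $V(G_h)$, and for every interior block $G_j$ with $2 \leq j \leq h-1$ at least one of $u$ and $v$ has a neighbour in $V(G_j)$ (otherwise the two endpoints of the bridges separating $G_j$ from the rest of $G-\{u,v\}$ would form a $2$-vertex cut of $G$).

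The heart of the argument is then a claw-free obstruction. Fix $u_1 \in N(u)\cap V(G_1)$ and $u_h \in N(u)\cap V(G_h)$. Suppose $u$ also had a neighbour $u_j \in V(G_j)$ for some $j$ with $3 \leq j \leq h-2$. The blocks $G_1$, $G_j$, $G_h$ are pairwise non-consecutive, so $u_1, u_j, u_h$ are pairwise non-adjacent (and pairwise distinct, as the $G_i$ are vertex-disjoint); together with $u$ they would induce a $K_{1,3}$, contradicting the claw-freeness of $G$. The same reasoning applies to $v$, so neither $u$ nor $v$ has a neighbour in $V(G_j)$ for any $j$ with $3 \leq j \leq h-2$. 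When $h \geq 5$, this range contains $j=3$, contradicting the $3$-connectedness fact that $G_3$ must contain a neighbour of $u$ or $v$.

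I do not anticipate any serious obstacle here: both ingredients (the block-bridge structure ruling out edges between non-consecutive $G_i$, and the $2$-vertex-cut argument from $3$-connectedness) are already in hand, and the claw-free obstruction is a one-line check. The only mild subtlety is confirming that $u_1$, $u_j$, $u_h$ are genuinely distinct and non-adjacent, which is immediate from the disjointness of the blocks and the path structure of $H$.
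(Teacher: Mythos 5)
Your proof is correct and follows essentially the same route as the paper: the lower bound $h\geq 2$ comes from the bridge guaranteed by Lemma~\ref{connected-with-upm}, and the upper bound is established by the same claw obstruction, using that $G_1$, an interior block non-consecutive with both ends, and $G_h$ are pairwise non-adjacent. The only cosmetic difference is that the paper immediately takes $j=3$ and applies the WLOG ``say $u$'' at the start, whereas you first rule out all $j$ in the range $3\leq j\leq h-2$ for both $u$ and $v$ and then invoke $3$-connectedness for $G_3$; the substance is identical.
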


\begin{proof}
If $ h \geq5$, by Claim \ref{H-path}, we have
$$E[V(G_1),V(G_3)]=E[V(G_1),V(G_{ h })]=E[V(G_3),V(G_{ h })]=\emptyset.$$
Recall that the vertex $u$ has a neighbour in each of $G_1$ and $G_{h}$,
and at least one of $u$ and $v$, say $u$, has a neighbour in $G_3$.
It follows that $G$ has a claw centered at the vertex $u$, a contradiction.
Hence, we have $ h \leq4$.
This, together with $ h \geq2$, implies that the result holds.
\end{proof}

\begin{lemma}\label{VH-neq3}
$ h \neq3$.
\end{lemma}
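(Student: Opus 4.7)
The plan is to argue by contradiction: assume $h=3$, so $H=z_1z_2z_3$ and $G-\{u,v\}$ decomposes as three maximal 2-edge-connected blocks $G_1,G_2,G_3$ joined by two bridges $e_{12},e_{23}$. WLOG $u$ has a neighbour in $V(G_2)$; pick $u_i\in N(u)\cap V(G_i)$ for $i=1,2,3$. Since $E[V(G_1),V(G_3)]=\emptyset$, $u_1u_3\notin E(G)$, and claw-freeness at $u$ applied to $\{u_1,u_2,u_3\}$ forces one of $u_1u_2,u_2u_3$ to be an edge, and this edge must be the corresponding unique bridge. WLOG $u_1u_2\in E(G)$ is $e_{12}$; let $b_2b_3$ denote $e_{23}$. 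The central tool is Kotzig's lemma (Lemma \ref{connected-with-upm}) applied to the unique perfect matching $M$ of $G-\{u,v\}$: $M$ contains at least one bridge of $G-\{u,v\}$, and if a 2-edge-connected block $G_i$ is forced by $M$ to be matched entirely internally then its restricted matching is a unique PM of $G_i$, so Kotzig forces a bridge in $G_i$, contradicting 2-edge-connectivity whenever $|V(G_i)|\geq 3$ (simple 2-edge-connected graphs with $\geq 2$ vertices have $\geq 3$ vertices).

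\textbf{Case 1: $u_2u_3\in E(G)$.} Both bridges then meet at $u_2$. For any $u_2'\in N(u)\cap V(G_2)\setminus\{u_2\}$, the triple $\{u_1,u_2',u_3\}$ is pairwise non-adjacent (each potential edge contradicts bridge uniqueness or the absence of edges across $V(G_1),V(G_3)$), a claw at $u$; so $N(u)\cap V(G_2)=\{u_2\}$. The analogous argument at $v$ gives $N(v)\cap V(G_2)\subseteq\{u_2\}$, and 3-connectivity then forces $V(G_2)=\{u_2\}$ (else $V(G_2)\setminus\{u_2\}$ is isolated in $G-u_2$). Now exactly one of $u_1u_2,u_2u_3$ lies in $M$; WLOG $u_1u_2\in M$, so $u_3$ is matched internally in $V(G_3)$, giving $|V(G_3)|$ even and $\geq 4$, and Kotzig on $G_3$ contradicts 2-edge-connectivity.

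\textbf{Case 2: $u_2u_3\notin E(G)$.} For any $u_1'\in N(u)\cap V(G_1)\setminus\{u_1\}$ the triple $\{u_1',u_2,u_3\}$ is pairwise non-adjacent, a claw at $u$; so $N(u)\cap V(G_1)=\{u_1\}$. The disconnection argument used in the $h=4$ case ($G-\{u_1,v\}$ disconnected if $|V(G_1)|\geq 2$) forces $V(G_1)=\{u_1\}$, and $\delta(G)\geq 3$ with $N(u_1)\subseteq\{u,v,u_2\}$ yields $N(u_1)=\{u,v,u_2\}$, so in particular $u_1v\in E(G)$. Since $u_1$'s only neighbour in $G-\{u,v\}$ is $u_2$, necessarily $u_1u_2\in M$. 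If $b_2b_3\notin M$ (which includes the case $b_2=u_2$, as then $u_2$ is already matched and cannot appear in $b_2b_3$ as well), then $V(G_3)$ is matched entirely internally, $|V(G_3)|$ is even $\geq 4$, and Kotzig on $G_3$ gives a contradiction. Otherwise $b_2\neq u_2$ and both bridges $u_1u_2,b_2b_3$ lie in $M$, so $|V(G_2)|$ is even $\geq 4$. In this residual sub-case I close via a 2-cut argument: the claw at $u$ with $\{u_1,u_2',u_3\}$ now shows $u_2'u_3\notin E$ unless $u_2'=b_2$ and $u_3=b_3$ (since the unique bridge $e_{23}$ is $b_2b_3$), forcing $N(u)\cap V(G_2)\subseteq\{u_2,b_2\}$; the claw at $v$ with $\{u_1,v_2,v_3\}$ similarly forces $v_2\in\{u_2,b_2\}$ for any $v_2\in N(v)\cap V(G_2)$. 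Then $V(G_2)\setminus\{u_2,b_2\}$ is nonempty and has no edges to $V(G_1)\cup V(G_3)$ (bridges blocked by $\{u_2,b_2\}$) or to $\{u,v\}$; hence $\{u_2,b_2\}$ is a 2-cut of $G$, contradicting 3-connectivity. The main obstacle is precisely this last sub-case, where Kotzig alone does not apply and the 2-cut argument --- using the claw-free constraints from both $u$ and $v$ together --- is essential.
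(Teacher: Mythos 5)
Your proof is correct, and it reaches the contradiction through a different case organization than the paper, even though the underlying toolkit --- claw-freeness to pin down bridge ends, $3$-connectivity to force small blocks, and Kotzig's lemma (Lemma~\ref{connected-with-upm}) on the restricted unique perfect matching --- is the same. The paper first proves (Claim~\ref{claim-VG1=u1}) that $V(G_1)=\{u_1\}$ unconditionally, then branches on whether $u$ has a second neighbour in $G_2$, with an auxiliary Claim~\ref{claim1} about singleton blocks to feed the Kotzig contradiction. You instead branch immediately on whether the two bridges share the vertex $u_2$ (i.e.\ whether $u_2u_3\in E(G)$). In your shared-vertex case you shrink the \emph{middle} block to $V(G_2)=\{u_2\}$ --- a configuration the paper never isolates explicitly --- and apply Kotzig directly to $G_3$, avoiding the paper's Claim~\ref{claim1} entirely. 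In your disjoint-bridge case you recover the paper's $V(G_1)=\{u_1\}$ conclusion, and your sub-case $b_2b_3\notin M$ is in spirit the paper's Claim~\ref{claim1}; your remaining sub-case $b_2b_3\in M$ closes with a $2$-cut $\{u_2,b_2\}$ argument, whereas the paper there derives that $G_2$ would have exactly two vertices and hence be a single (bridge) edge, contradicting $2$-edge-connectivity. Both finishes are equivalent in force, but the $2$-cut framing makes the use of the combined claw constraints from $u$ and $v$ more transparent. One small point worth stating explicitly in your write-up (you clearly use it implicitly): since $G$ is simple, any non-singleton block $G_i$ is $2$-edge-connected and hence has at least $3$ vertices, so "even and $\geq 2$" upgrades to "$\geq 4$"; this is what rules out the degenerate $|V(G_i)|=2$ possibility before applying Kotzig.
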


\begin{proof}
Assume, to the contrary, that $ h =3$.
Then $$E[V(G_1),V(G_3)]=\emptyset\quad  \mbox{and}\quad  |E[V(G_1),V(G_2)]|=|E[V(G_2),V(G_3)]|=1.$$
Recall that each of $u$ and $v$ has neighbours in both $G_1$ and $G_3$, and  at least one of $u$ and $v$, say $u$, has a neighbour in $G_2$. Then  each $G_i$ has a neighbour of $u$, say $u_i$, $i=1,2,3$.
We see that $u_1u_3\notin E(G)$. Using the fact that $G$ is claw-free,
we can derive that $u_2$ is adjacent to at least one of $u_1$ and $u_3$, say $u_1$.
Since $|E[V(G_1),V(G_2)]|=1$, we have $E[V(G_1),V(G_2)]=\{u_1u_2\}$.

\begin{claim}\label{claim1}
If $V(G_2)=\{u_2\}$, then $V(G_1)\neq\{u_1\}$ and $V(G_3)\neq\{u_3\}$.
\end{claim}

Assume, to the contrary, that $V(G_1)=\{u_1\}$.
Since $G$ is 3-connected and $E[V(G_1),V(G_2)]=\{u_1u_2\}$, we have $N(u_1)=\{u,v,u_2\}$.
Recall that $M$ is the unique perfect matching of $G-\{u,v\}$.
Then $u_1u_2\in M$, and hence $M\backslash\{u_1u_2\}$ is the unique perfect matching of $G_3$.
Lemma \ref{connected-with-upm} implies that $G_3$ has a bridge, a contradiction to the fact that $G_3$ is 2-edge-connected.
Hence, we have $V(G_1)\neq\{u_1\}$.
Using similar arguments, we can show that $V(G_3)\neq\{u_3\}$.
Claim \ref{claim1} holds.
\vspace{2mm}

\begin{claim}\label{claim-VG1=u1}
The vertex $u$ has precisely one neighbour $u_1$ in $G_1$ and $V(G_1)=\{u_1\}$.
\end{claim}

Assume, to the contrary, that $u$ has a neighbour $u_1'$ in $G_1$ besides the vertex $u_1$.
Then we have $u_1'u_2, u_1'u_3\notin E(G)$ since $E[V(G_1),V(G_2)]=\{u_1u_2\}$ and $E[V(G_1),V(G_3)]=\emptyset$.
Using the fact that $G$ is claw-free, we can obtain that $u_2u_3\in E(G)$, and hence $E[V(G_2),V(G_3)]=\{u_2u_3\}$. Moreover, we have $N(u)\cap V(G_2)=\{u_2\}$ and $N(u)\cap V(G_3)=\{u_3\}$.
If $|V(G_2)|\geq 2$, then $G-\{u_2,v\}$ is disconnected, contradicting the fact that $G$ is 3-connected.
Hence, we have $|V(G_2)|=1$ and then $V(G_2)=\{u_2\}$.
By Claim \ref{claim1}, we have $V(G_3)\neq\{u_3\}$ and hence $|V(G_3)|\geq 2$.
Since $E[V(G_2),V(G_3)]=\{u_2u_3\}$ and $N(u)\cap V(G_3)=\{u_3\}$, we can derive that  $G-\{u_3,v\}$ is disconnected,  a contradiction.
Therefore, $u$ has precisely one neighbour $u_1$ in $G_1$.
If $|V(G_1)|\geq 2$, then $G-\{u_1,v\}$ is disconnected, a contradiction.
Hence, we have $|V(G_1)|=1$ and then $V(G_1)=\{u_1\}$.
Claim \ref{claim-VG1=u1} holds.
\vspace{2mm}

By Claim \ref{claim-VG1=u1}, we have $V(G_1)=\{u_1\}$.
Since $v$ has a neighbour in $G_1$, we have $N(u_1)=\{u,v,u_2\}$.
We now consider the following two cases according to the neighbours of $u$ in $G_2$.
In both cases we can get contradictions, and then we prove $ h \neq3$.

If $u$ has a neighbour $u_2'$ in $G_2$ besides the vertex $u_2$, then $u_1u_2'\notin E(G)$ as $E[V(G_1),V(G_2)]=\{u_1u_2\}$.
Combining this with $u_1u_3\notin E(G)$ and $G$ is claw-free, we have $u_2'u_3\in E(G)$.
So $E[V(G_2),V(G_3)]=\{u_2'u_3\}$.
This implies that $N(u)\cap V(G_2)=\{u_2,u_2'\}$.   Using the fact that $G$ is claw-free,  we have $N(u)\cap V(G_3)=\{u_3\}$.
If $|V(G_3)|\geq 2$, then $G-\{u_3,v\}$ is disconnected, a contradiction.
So $V(G_3)=\{u_3\}$.
Since $v$ has a neighbour in $G_3$, we have $N(u_3)=\{u,v,u_2'\}$.
Since $G$ is claw-free, we conclude  that $N(v)\cap V(G_2)\subseteq\{u_2,u_2'\}$.
If $|V(G_2)|\geq3$, then $G-\{u_2,u_2'\}$ is disconnected, a contradiction.
Therefore, we have $|V(G_2)|=2$ and so $V(G_2)=\{u_2,u_2'\}$. This implies that $G_2=u_2u_2'$, a contradiction to the fact that $G_2$ is 2-edge-connected.

We now consider the case when $u$ has precisely one neighbour $u_2$ in $G_2$, i.e., $N(u)\cap V(G_2)=\{u_2\}$.
If $N(v)\cap V(G_2)\subseteq\{u_2\}$, then let $a$ be the end of the only edge of $E[V(G_2),V(G_3)]$ in $G_3$.
Since $V(G_1)=\{u_1\}$, by Claim \ref{claim1}, we have $V(G_2)\neq\{u_2\}$, which implies that $G-\{u_2,a\}$ is disconnected, a contradiction.
Therefore, $v$ has a neighbour $v_2$ in $G_2$ besides $u_2$.
Then $v_2\neq u_2$. Since $E[V(G_1),V(G_2)]=\{u_1u_2\}$, we have $u_1v_2\notin E(G)$.
Recall that $v$ has a neighbour in $G_3$, say $v_3$.
Then $u_1v_3\notin E(G)$ since $E[V(G_1),V(G_3)]=\emptyset$.
Because  $G$ is claw-free, we have $v_2v_3\in E(G)$ and $N(v)\cap V(G_3)=\{v_3\}$.
If $|V(G_3)|\geq2$, then $G-\{v_3,u\}$ is disconnected, a contradiction.
Therefore, we have $|V(G_3)|=1$ and hence $V(G_3)=\{v_3\}=\{u_3\}$.
It follows that $N(\{u,v\})\cap V(G_2)\subseteq\{u_2,v_2\}$ since $G$ is claw-free.
Because $G$ is 3-connected, $G-\{u_2,v_2\}$ is connected, which implies that $V(G_2)=\{u_2,v_2\}$.
Hence, we have $G_2=u_2v_2$, contradicting the fact that $G_2$ is 2-edge-connected.
\end{proof}

\begin{lemma}\label{VH-neq2}
$ h \neq2$.
\end{lemma}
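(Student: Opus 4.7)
Assume, for contradiction, that $h=2$. Then $E[V(G_1),V(G_2)]=\{u_1u_2\}$ for some $u_1\in V(G_1)$ and $u_2\in V(G_2)$, and $u_1u_2$ is the unique bridge of $G-\{u,v\}$. Since $G-\{u,v\}$ is connected with unique perfect matching $M$, Lemma~\ref{connected-with-upm} gives $u_1u_2\in M$; hence $|V(G_1)|$ and $|V(G_2)|$ are both odd. Moreover, each of $u$ and $v$ has a neighbour in each of $G_1,G_2$. Following the strategy of the proof of Lemma~\ref{VH-neq3}, I aim to force $V(G_1)=\{u_1\}$ and $V(G_2)=\{u_2\}$; this yields $|V(G)|=4$, so the brick $G$ must be $K_4$, contradicting the hypothesis.

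The central step is an analogue of Claim~\ref{claim-VG1=u1}: first that $u$'s only neighbour in $V(G_1)$ is $u_1$, and then that $V(G_1)=\{u_1\}$. Suppose $u$ has a second neighbour $u_1'\in V(G_1)\setminus\{u_1\}$, and let $u_2'\in N(u)\cap V(G_2)$. Since the only cross edge between $V(G_1)$ and $V(G_2)$ is $u_1u_2$ and $u_1'\neq u_1$, we have $u_1'u_2'\notin E(G)$; applying claw-freeness at $u$ to the triple $\{v,u_1',u_2'\}\subseteq N(u)$ forces $vu_1'\in E(G)$ or $vu_2'\in E(G)$. By varying $u_2'$ over $N(u)\cap V(G_2)$, running the symmetric argument at $v$, and swapping the roles of $G_1$ and $G_2$, I would pin down $N(u)$ and $N(v)$ inside each $V(G_i)$ so tightly that either a 2-vertex cut of $G$ appears (contradicting 3-connectivity) or some $G_i$ reduces to a single edge (contradicting the 2-edge-connectedness of $G_i$). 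Once $u$'s only neighbour in $V(G_1)$ is $u_1$, the direct 3-connectivity argument used in the last paragraph of the proof of Claim~\ref{claim-VG1=u1} forces $V(G_1)=\{u_1\}$, and by symmetry $V(G_2)=\{u_2\}$.

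A useful auxiliary fact I would use throughout is that $N_{G_i}(u_i)$ forms a clique in $G_i$: the opposite bridge endpoint $u_{3-i}$ is a neighbour of $u_i$ that has no neighbour in $V(G_i)\setminus\{u_i\}$, so for any $x,y\in N_{G_i}(u_i)$ claw-freeness at $u_i$ applied to $\{u_{3-i},x,y\}\subseteq N(u_i)$ forces $xy\in E(G)$. The main obstacle, compared with Lemma~\ref{VH-neq3}, is the loss of a third component: there, the middle component $G_2$ crucially restricted the cross-neighbourhoods of $u$ and $v$, whereas here the claw-freeness implications branch more. I expect the delicate case to be $|V(G_1)|,|V(G_2)|\geq 3$, where to close off subcases one must additionally exploit the uniqueness of the perfect matching of each $G_i-u_i$ (by another application of Lemma~\ref{connected-with-upm} inside $G_i$) in combination with the clique property above and 3-connectivity.
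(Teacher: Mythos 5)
Your plan has a fatal gap: you aim to force $N(u)\cap V(G_1)=\{u_1\}$ (and then $V(G_1)=\{u_1\}$) using only claw-freeness, 3-connectivity and 2-edge-connectedness of the $G_i$, but these structural constraints alone cannot do it. In fact, after applying Lemma~\ref{connected-with-upm} inside each $G_i-u_i$ to obtain a bridge $x_{i1}x_{i2}$ splitting $G_i-u_i$ into pieces $G_{i1},G_{i2}$, the structural constraints are consistent with $|V(G_1)|=|V(G_2)|=3$ and with $u$ being adjacent to $x_{11}$ \emph{and} $x_{12}$ (two vertices of $V(G_1)$, neither equal to the bridge end $u_1$). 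The paper exhibits exactly these configurations as the spanning subgraphs $F_3$ and $F_4$ — eight-vertex, 3-connected, claw-free, and perfectly compatible with your auxiliary clique observation and with each $G_i-u_i$ having a unique perfect matching. So your hoped-for reduction to $|V(G)|=4$ and $G=K_4$ is simply false as a consequence of the listed hypotheses.

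What is missing is a second application of the main hypothesis. The paper rules out $F_3$ (and $F_4$) by identifying \emph{another} edge, e.g.\ $ux_{12}$, and showing it is $b$-invariant: the key observation is that $F_3-\{ux_{12},vx_{22}\}$ is isomorphic to the brick $R_8$, whence $G-ux_{12}$ is itself a brick, so $ux_{12}$ is $b$-invariant. By the standing hypothesis $ux_{12}$ must then be solitary, yet $G-\{u,x_{12}\}$ visibly contains an even cycle and hence two perfect matchings — contradiction. Your proposal never invokes the hypothesis that every $b$-invariant edge is solitary beyond the initial edge $uv$, and without that second invocation the case $h=2$ cannot be closed. (Your auxiliary fact that $N_{G_i}(u_i)$ is a clique is correct and the parity/bridge setup matches the paper's, but the analogue of Claim~\ref{claim-VG1=u1} you are trying to transplant from the $h=3$ case genuinely fails when $h=2$.)
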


\begin{proof}
Assume, to the contrary, that $ h =2$.
Then $E[V(G_1),V(G_2)]$ has precisely one edge, say $x_1x_2$, where $x_i\in V(G_i)$, $i=1,2$.
Since $M$ is the unique perfect matching of $G-\{u,v\}$, we have $x_1x_2\in M$, which implies that
$G_i-x_i$ has a unique perfect matching $M\cap E(G_i-x_i)$, $i=1,2$.
Hence, both $|V(G_1)|$ and $|V(G_2)|$ are odd.

\begin{claim}\label{claim-G1-x-G2-y-connected-upm}
The graphs $G_1-x_1$ and $G_2-x_2$ are both connected.
\end{claim}

Assume, to the contrary, that $G_1-x_1$ is disconnected.
Then $G_1-x_1$ has at least two distinct components, say $L_1$ and $L_2$.
Recall that $G_1$ is 2-edge-connected.
Then $x_1$ has at least one neighbours $x_{1j}$ in $L_j$, $j=1,2$.
It follows that $x_{11}x_{12}\notin E(G)$.
Since $E[V(G_1),V(G_2)]=\{x_1x_2\}$, we have $x_{11}x_2\notin E(G)$ and $x_{12}x_2\notin E(G)$, which implies that $G[\{x_1,x_{11},x_{12},x_2\}]$ is a claw centered at $x_1$ in $G$, a contradiction.
Hence, $G_1-x_1$ is connected.
Similarly, we can derive that $G_2-x_2$ is also connected.
Claim \ref{claim-G1-x-G2-y-connected-upm} holds.
\vspace{2mm}

\begin{figure}[h]
 \centering
 \includegraphics[width=0.75\textwidth]{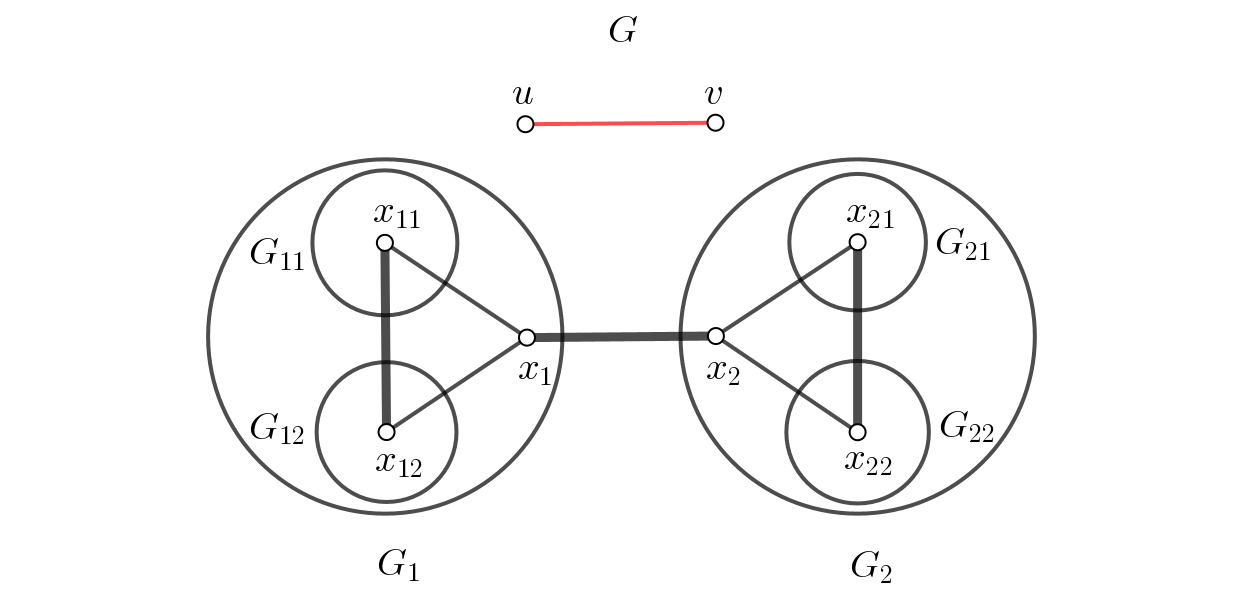}\\
 \caption{Illustration for the proof of Lemma \ref{VH-neq2}.}\label{fig4}
\end{figure}

By Claim \ref{claim-G1-x-G2-y-connected-upm}, $G_i-x_i$ is connected, $i=1,2$.
Recall that $G_i-x_i$ has a unique perfect matching $M\cap E(G_i-x_i)$.
Then, by Lemma \ref{connected-with-upm}, $G_i-x_i$ has a bridge, say $x_{i1}x_{i2}$, belonging to the unique perfect matching.
It follows that $G_i-x_i-x_{i1}x_{i2}$ has exactly two components, say $G_{i1}$ and $G_{i2}$,  where $x_{ij}\in V(G_{ij})$, $j=1,2$.
Then $E[V(G_{i1}),V(G_{i2})]=\{x_{i1}x_{i2}\}$, see  Figure \ref{fig4}.

Since $G_i$ is 2-edge-connected, the vertex $x_i$ has at least one neighbour in each of  $G_{i1}$ and $G_{i2}$.
This, together with $G$ is claw-free and $E[V(G_{i1}),V(G_{i2})]=\{x_{i1}x_{i2}\}$,
we can derive that $N(x_i)\cap V(G_{ij})=\{x_{ij}\}$, $j=1,2$.
Since $G$ is 3-connected and $E[V(G_{i1}),V(G_{i2})]=\{x_{i1}x_{i2}\}$, we see that the following claim holds.

\begin{claim}\label{claim-u-v-Gij}
 For  each $G_{ij}$ with $i,j\in\{1,2\}$, at least one of $u$ and $v$ has a neighbour in $G_{ij}$.
\end{claim}

\begin{claim}\label{claim-u-G11}
Suppose that one of  $u$ and $v$ has no neighbour in $G_{ij}$. Then
$V(G_{ij})=\{x_{ij}\}$ and $V(G_{i{(3-j)}})=\{x_{i{(3-j)}}\}$, where $i,j\in\{1,2\}$.
Moreover,
if $v$ has no neighbour in $G_{ij}$, then $N(x_{ij})=\{u,x_i,x_{i{(3-j)}}\}$ and $vx_{i{(3-j)}}\in E(G)$;
if $u$ has no neighbour in $G_{ij}$, then $N(x_{ij})=\{v,x_i,x_{i{(3-j)}}\}$ and $ux_{i{(3-j)}}\in E(G)$.
\end{claim}

We now prove Claim \ref{claim-u-G11}.
By symmetry, it suffices to prove the case that $i=j=1$.
Recall that $N(x_1)\cap V(G_{11})=\{x_{11}\}$ and $E[V(G_{11}),V(G_{12})]=\{x_{11}x_{12}\}$.
Suppose that $v$ has no neighbour in $G_{11}$.
If $|V(G_{11})|\geq2$,  then $G-\{u,x_{11}\}$ is disconnected, a contradiction.
Thus, we have $V(G_{11})=\{x_{11}\}$ and hence  $N(x_{11})=\{u,x_1,x_{12}\}$.
Recall that $u$ has a neighbour in $G_2$.
Since $G$ is claw-free, we have $N(u)\cap V(G_{12})\subseteq\{x_{12}\}$.
If $|V(G_{12})|\geq2$,  then $G-\{v,x_{12}\}$ is disconnected, a contradiction.
It follows that $V(G_{12})=\{x_{12}\}$.
If $vx_{12}\notin E(G)$, then $G-\{u,x_1\}$ is disconnected, one component is $x_{11}x_{12}$, a contradiction.
Hence, we have $vx_{12}\in E(G)$.
Using similar discussion, the result holds when $u$ has no neighbour in $G_{11}$.
Hence, Claim \ref{claim-u-G11} holds.
\vspace{2mm}

The subsequent proof will be conducted by considering the neighbours  of vertices $u$ and $v$.
Assume that there exists some $G_{ij}$, say $G_{11}$, such that $v$ has no neighbour in it.
By Claim \ref{claim-u-G11}, we have $V(G_{11})=\{x_{11}\}$, $V(G_{12})=\{x_{12}\}$, $N(x_{11})=\{u,x_1,x_{12}\}$, and $vx_{12}\in E(G)$.

We assert that each of $u$ and $v$ has neighbours in both $G_{21}$ and $G_{22}$.
If $v$ has no neighbour in some $G_{2j}$, where $j\in\{1,2\}$, then by Claim \ref{claim-u-G11}, we have $V(G_{2j})=\{x_{2j}\}$ and $ux_{2j}\in E(G)$, and then $vx_{2j}\notin E(G)$.
Recall that $x_{11}x_{2j}\notin E(G)$ and $x_{11}v\notin E(G)$.
Then the set $\{v,x_{11},x_{2j}\}$ forms a stable set of $G$, which implies that there is a claw centered at $u$ in $G$, a contradiction.
Therefore, $v$ has a neighbour in each of  $G_{21}$ and $G_{22}$.

\begin{figure}[h]
 \centering
 \includegraphics[width=\textwidth]{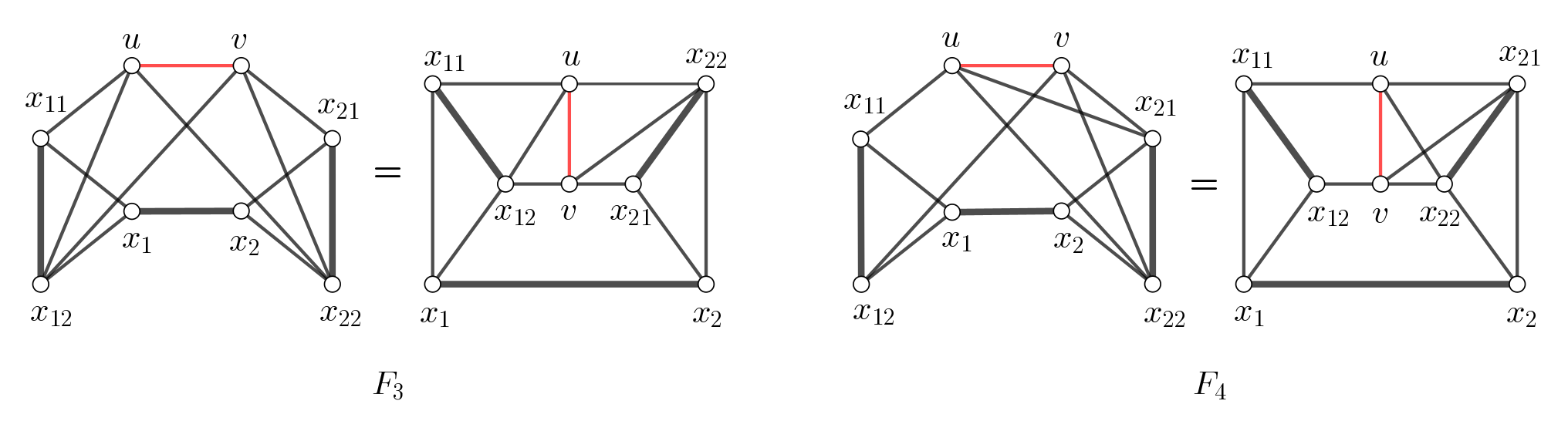}\\
 \caption{The two graphs $F_3$ and $F_4$.}\label{fig5}
\end{figure}

If $u$ has no neighbour in some $G_{2j}$, say $G_{21}$, then  by Claim \ref{claim-u-G11}, we have $V(G_{21})=\{x_{21}\}$, $V(G_{22})=\{x_{22}\}$, $N(x_{21})=\{v,x_2,x_{22}\}$, and  $ux_{22}\in E(G)$.
It follows that $ux_{21}\notin E(G)$.
Since $v$ has a neighbour in each of  $G_{21}$ and $G_{22}$, we have $vx_{21},vx_{22}\in E(G)$.
We now consider the three neighbours $u$, $x_{12}$ and $x_{21}$ of $v$ in $G$.
Since $ux_{21}\notin E(G)$, $x_{12}x_{21}\notin E(G)$ and $G$ is claw-free, we have $ux_{12}\in E(G)$.
It follows that $F_3$ (see Figure \ref{fig5}) is a spanning subgraph of $G$.
Note that $F_3-\{ux_{12},vx_{22}\}$ is isomorphic to $R_8$ (see Figure \ref{fig1}), which is a brick.
Then, by the definition of bricks, both $G$ and $G-\{ux_{12}\}$ are bricks.
It follows that $ux_{12}$ is a $b$-invariant edge of $G$ and hence it is solitary in $G$.
This implies that $G-\{u, x_{12}\}$ has a unique perfect matching.
Since $G-\{u, x_{12}, x_1, x_{11}\}$ has a hamiltonian even cycle $vx_{21}x_2x_{22}v$,
$G-\{u, x_{12}\}$ has two perfect matchings $\{x_1x_{11}, vx_{21},x_2x_{22}\}$ and $\{x_1x_{11}, x_{21}x_2,x_{22}v\}$, a contradiction.
Therefore, $u$ has a neighbour in each of  $G_{21}$ and $G_{22}$.
The assertion holds.

Using the above assertion, $G$ is claw-free and $E[V(G_{21}),V(G_{22})]=\{x_{21}x_{22}\}$,
we can derive $N(u)\cap V(G_{2j})=\{x_{2j}\}$ and $N(v)\cap V(G_{2j})=\{x_{2j}\}$, $j=1,2$.
Recall that $N(x_2)\cap V(G_{2j})=\{x_{2j}\}$.
Then $G_{2j}$ is trivial and contains only one vertex $x_{2j}$ since $G$ is 3-connected.
It follows that  $F_4$ (see Figure \ref{fig5}) is a spanning subgraph of $G$.
However, the following Claim \ref{claim-F2} asserts that this is impossible.
Consequently,  $v$ has a neighbour in each $G_{ij}$, $i,j\in\{1,2\}$.

\begin{claim}\label{claim-F2}
$F_4$ is not a spanning subgraph of $G$.
\end{claim}

If $F_4$ is a spanning subgraph of $G$, then using similar discussion as the above paragraph,
we can derive that $ux_{22}$ is a $b$-invariant edge of $G$ and hence it is solitary in $G$.
This implies that $G-\{u, x_{22}\}$ has a unique perfect matching.
Note that $G-\{u, x_{22}\}$ has a hamiltonian even cycle $x_{11}x_{12}vx_{21}x_2x_1x_{11}$.
This implies that $G-\{u, x_{22}\}$ has two perfect matchings, a contradiction.
Claim \ref{claim-F2} holds.
\vspace{2mm}

Similarly, we can show that $u$ has a neighbour in each $G_{ij}$, $i,j\in\{1,2\}$.
Since $G$ is claw-free and $E[V(G_{i1}),V(G_{i2})]=\{x_{i1}x_{i2}\}$,
we have $N(u)\cap V(G_{ij})=\{x_{ij}\}$ and $N(v)\cap V(G_{ij})=\{x_{ij}\}$.
Recall that $N(x_i)\cap V(G_{ij})=\{x_{ij}\}$.
Since $G$ is 3-connected, we can derive that $G_{ij}$ contains only one vertex $x_{ij}$,
which implies that $F_4$ is a spanning subgraph of $G$, a contradiction to Claim \ref{claim-F2}.
Therefore,  $h \neq2$.
\end{proof}

\section*{Acknowledgements}
This work is supported by the National Natural Science Foundation of China (Nos. 12501480, 12571381 and 12371361) and the Natural Science Foundation of Henan Province (No. 252300421786).

\end{document}